\newtheorem{theorem}{Theorem}[section]
\newtheorem{lemma}[theorem]{Lemma}
\newtheorem{remark}{Remark}
\title{Development of Singularities of the Skyrme Model}
\author{Michael McNulty}
\date{\today}
\begin{document}

\maketitle
\begin{abstract}
The Skyrme model is a geometric field theory and a quasilinear modification of the Nonlinear Sigma Model (Wave Maps).  In this paper we study the development of singularities for the equivariant Skyrme Model, in the strong-field limit, where the restoration of scale invariance allows us to look for self-similar blow-up behavior. After introducing the Skyrme Model and reviewing what's known about formation of singularities in equivariant Wave Maps, we prove the existence of smooth self-similar solutions to the $5+1$-dimensional Skyrme Model in the strong-field limit, and use that to conclude that the solution to the corresponding Cauchy problem blows up in finite time, starting from a particular class of everywhere smooth initial data. \end{abstract}

\section{Background}
One of the most extensively studied geometric field theories is Wave Maps. In this field theory, one studies a map from the $m+1$-dimensional Minkowski space, denoted by $\mathbb{R}^{1,m}$, with the usual Minkowski metric, denoted by $g$, to a complete $n$-dimensional Riemannian manifold $(\mathcal{N},h)$. A Wave Map, $U:\mathbb{R}^{1,m}\rightarrow\mathcal{N}$, is a critical point of the following action functional:
	\begin{equation}
		\mathcal{A}[U]=\frac{1}{2}\int tr_g\big(S(U)\big)\;d\mu_g \label{Wave Maps Action}
	\end{equation}
where $S(U):=U^*h$. The corresponding Euler-Lagrange Equation is the following nonlinear wave equation:
	\begin{equation}
		\Box_gU^a =-\Gamma^a_{bc}(U)\partial_\mu U^b\partial^\mu U^c \label{Wave Maps PDE}
	\end{equation}
where $\Gamma^a_{bc}$ are the Christoffel symbols of the metric $h$. Much is known of this equation already. Of particular interest is its development of singularities in the equivariant case with $m=3$, $\mathcal{N}=S^3$, and $h$ the standard round metric established by Shatah (see \cite{S_wksln}) and then generalized to rotationally symmetric, non-convex Riemannian manifolds by Shatah and Tahvildar-Zadeh (see \cite{SS_Cauchy} and \cite{SCS_98}).

The Skyrme Model is a quasilinear adaptation of Wave Maps, originally proposed by physicist Tony Skyrme (see \cite{Sk_unifld} and \cite{Sk_Nonlin}) for applications to particle physics. Given $(\mathbb{R}^{1,m},g)$ and $(\mathcal{N},h)$ as above, a Skyrme Map, $U:\mathbb{R}^{1,m}\rightarrow\mathcal{N}$, is a critical point of the following functional:
\begin{equation}
\mathcal{S}[U]=\int\bigg[\frac{\alpha^2}{2}tr_g\big(S(U)\big)-\frac{\beta^2}{4}\Big(tr_g\big(S^2(U)\big)-tr_g^2\big(S(U)\big)\Big)\bigg]\;\;d\mu_g \label{Skyrme Maps Action}
\end{equation}
for $\alpha,\beta\in\mathbb{R}$. In fact, the integrand of \eqref{Skyrme Maps Action} is a combination of the first two symmetric polynomials\footnote{Given an $n\times n$ matrix, $A$, with eigenvalues $\{\lambda_i\}_{i=1}^n$, we call $tr(A)=\sum_{i=1}^n\lambda_i$ the first symmetric polynomial of $A$ and $tr(A^2)-tr^2(A)=\sum_{i=1}^n\sum_{j=1,j\neq i}^n\lambda_i\lambda_j$ the second symmetric polynomial of $A$.} of $S(U)$. One can immediately see that when $\beta=0$ and $\alpha=\pm1$, we obtain \eqref{Wave Maps Action}. The corresponding Euler-Lagrange equation has been studied recently (see \cite{Geba_Lrgd} and \cite{G_2p1}). In particular, the Skyrme Model has been shown to posess large data global regularity in the equivariant case by Geba (see \cite{Geba_Lrgd}) when $n=3$. 

\section{Main Problem and Main Result}
We concern ourselves with the development of singularities of Skyrme Maps for the equivariant case of the Skyrme Model with $m=5$, $\mathcal{N}=S^5$, and $h$ the standard round metric in the strong-field limit. The solution to the equivariant, strong-field Skyrme Model equation of motion will be shown to blow up in finite time by the same mechanism as in \cite{S_wksln}.

The strong-field limit of the Skyrme Model is the limit of \eqref{Skyrme Maps Action} when $\alpha\rightarrow0$. Furthermore, an equivariant Skyrme Map, $U:\mathbb{R}^{1,5}\rightarrow S^5$, is a map of the form
\begin{equation}
U(t,r,\omega)=\big(u(t,r),\omega\big)
\end{equation}
for some unknown $u:\mathbb{R}\times\mathbb{R}_{\geq0}\rightarrow[0,\pi]$ where $t\in\mathbb{R}$ is the time coordinate, $r\in\mathbb{R}_{\geq0}$ is the radial coordinate of $\mathbb{R}^5$, and $\omega\in S^4\subset\mathbb{R}^5$. Furthermore, under the strong-field limit and equivariant ansatz, the corresponding Euler-Lagrange equation for $u$ is the semilinear wave equation
\begin{equation}
u_{tt}-u_{rr}-\frac{2}{r}u_r+\cos u\bigg(\frac{u_t^2-u_r^2}{\sin u}+\frac{3\sin u}{r^2}\bigg)=0. \label{Skyrme Maps PDE}
\end{equation}
The following theorem is the main result of this paper:

\begin{theorem}
Let $\alpha=0$ in \eqref{Skyrme Maps Action}. There exists a class of smooth initial data such that the corresponding Cauchy problem for the Euler-Lagrange equation of an equivariant Skyrme Map from $\mathbb{R}^{1,5}$ into $S^5$, in the strong field-limit, has a solution that blows up in finite time. \label{Main Result}
\end{theorem}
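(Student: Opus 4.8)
The plan is to reproduce, for the reduced profile equation, the self-similar blow-up mechanism that Shatah used for wave maps \cite{Shatah Dev}. Equation \eqref{Skyrme Maps PDE} is invariant under the scaling $u(t,r)\mapsto u(\lambda t,\lambda r)$, so I would look for a solution singular at a single spacetime point, which after a translation I place at $(T,0)$ with $T>0$. Writing $\rho=r/(T-t)$ and imposing the self-similar ansatz $u(t,r)=f(\rho)$, a direct computation of $u_t,u_{tt},u_r,u_{rr}$ reduces \eqref{Skyrme Maps PDE} to the second-order ordinary differential equation
\begin{equation}
(\rho^2-1)f''+\Big(2\rho+\frac{2}{\rho}\Big)f'=\frac{(\rho^2-1)\cos f}{\sin f}\,(f')^2-\frac{3\sin f\cos f}{\rho^2},\label{ss-ode}
\end{equation}
on the interval $\rho\in(0,1)$, where $\rho=0$ is the symmetry axis $r=0$ and $\rho=1$ is the backward light cone of the blow-up point. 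The whole problem thereby collapses to exhibiting a solution of \eqref{ss-ode} that is smooth up to and including both endpoints.

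Next I would fix the boundary conditions forced by regularity of the map. At the axis $\rho=0$ equivariance requires that the fibre $S^4$ collapse, so $f$ must tend to a pole of $S^5$, i.e. $f(0)=0$ (after possibly using the symmetry $f\mapsto\pi-f$ of \eqref{ss-ode}); the delicate point is that the quasilinear coefficient $(f')^2/\sin f$ is itself singular there and must be kept bounded. At the light cone $\rho=1$ the factor $(\rho^2-1)$ multiplying $f''$ degenerates, so \eqref{ss-ode} has a regular singular point; evaluating the equation at $\rho=1$ forces the compatibility relation $4f'(1)=-\tfrac{3}{2}\sin\big(2f(1)\big)$, so that a single parameter $\gamma:=f(1)$ determines admissible data on the cone.

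To construct the profile I would run a shooting argument anchored at the cone. For each $\gamma$ I would solve \eqref{ss-ode} with $f(1)=\gamma$ and $f'(1)=-\tfrac{3}{8}\sin\big(2\gamma\big)$, integrate inward toward $\rho=0$, and study the resulting map $\gamma\mapsto f(0;\gamma)$, using a continuity/intermediate-value argument to select $\gamma^\ast$ with $f(0;\gamma^\ast)=0$; a Frobenius/bootstrap analysis of \eqref{ss-ode} at the two singular points would then upgrade this matched solution to a genuinely smooth one. (Should \eqref{ss-ode} possess a closed-form solution, as several equivariant wave-map profiles do, this step reduces to writing it down and verifying it.) I expect this existence step to be the principal obstacle. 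Both endpoints are singular points of \eqref{ss-ode}, and the quasilinear factor $(f')^2/\sin f$ must be shown to stay finite as $f\to0$ at the axis; this severely constrains the admissible vanishing rate of $f$ near $\rho=0$ and is precisely the condition the shooting parameter must be tuned to meet.

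Finally I would transfer the singularity back to the Cauchy problem as in \cite{Shatah Dev}. Given a smooth profile $f$, the function $u(t,r)=f\big(r/(T-t)\big)$ solves \eqref{Skyrme Maps PDE} throughout the interior of the backward light cone $\{r<T-t\}$ and is smooth for every $t<T$; but since $u_r(t,r)=(T-t)^{-1}f'(\rho)$ and $f$ is non-constant, its gradient obeys $\|\partial u(t,\cdot)\|_{L^\infty}\gtrsim (T-t)^{-1}\to\infty$ as $t\to T^-$, so a genuine singularity forms at $(T,0)$. Restricting to $t=0$ yields the data $u(0,r)=f(r/T)$ and $u_t(0,r)=(r/T^2)f'(r/T)$, which are smooth on the ball $\{r<T\}$. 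Because \eqref{Skyrme Maps PDE} is semilinear with constant-coefficient principal part $\Box_3$, finite speed of propagation allows me to alter these data outside $\{r<T\}$ so as to obtain everywhere-smooth, compactly supported initial data whose evolution coincides with $u$ inside the light cone. This exhibits the advertised class of smooth initial data for which the solution blows up at time $T$, proving Theorem \ref{Main Result}.
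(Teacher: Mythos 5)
Your overall architecture is the right one and matches the paper's: impose the self-similar ansatz, reduce to a profile ODE on $[0,1]$ singular at both endpoints, and transfer a smooth profile back to the Cauchy problem via the past light cone and finite speed of propagation. The last step of your argument is essentially the paper's proof of Theorem \ref{Main Result}. But the heart of the matter --- the existence of a profile that is smooth up to and including both singular endpoints --- is exactly the part you leave as a sketch, and it is the entire technical content of the paper. Where you propose a shooting argument from the cone with an unproven continuity/intermediate-value step, the paper substitutes $y=\cos w$ to obtain the semilinear equation \eqref{Reduced Self Similar Skyrme Maps ODE}, minimizes a regularized functional over $\{y\in H^1_{\mathrm{rad}}(B_1):y(1)=0\}$ (Lemmas \ref{C^1 Functional Bounded Below Lemma} and \ref{Minimum Attained and Smoothness Achieved}), rules out the trivial minimizer by a second-variation argument to force $y(0)=\pm1$ (Lemma \ref{y(0) lemma}), proves monotonicity to get $|y|\le1$ (Lemma \ref{monotone lemma}), and only then runs the regularity bootstrap at the two endpoints (Lemma \ref{Smoothness Lemma}). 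None of the corresponding difficulties in your shooting scheme --- that solutions launched from $\rho=1$ reach $\rho=0$ at all, that $\gamma\mapsto f(0;\gamma)$ is continuous through the singular axis, that the quasilinear factor $(f')^2/\sin f$ stays bounded as $f\to0$ --- is addressed; you correctly identify these as the principal obstacles but do not overcome them, so the proof is not complete.

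There is also a concrete error in your local analysis at the cone. For the radial $3$-dimensional wave operator the first-order coefficient in the profile equation is $2\rho-\tfrac{2}{\rho}=\tfrac{2(\rho^2-1)}{\rho}$, not $2\rho+\tfrac{2}{\rho}$ (compare \eqref{Self Similar Skyrme ODE}, in which every derivative term carries the degenerating factor $1-\rho^2$ after clearing denominators). Consequently, at $\rho=1$ \emph{both} the $f''$ and $f'$ coefficients vanish, and evaluating the equation there forces $\sin\big(2f(1)\big)=0$ rather than your relation $4f'(1)=-\tfrac32\sin\big(2f(1)\big)$: the admissible value of $f(1)$ is pinned to $\pi/2$ (equivalently $y(1)=\cos f(1)=0$, which is precisely the boundary condition built into the paper's space $X$), while $f'(1)$ remains a free parameter. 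The one-parameter family of cone data you propose to shoot from therefore does not exist as described, and the shooting problem would have to be set up with $f'(1)$, not $f(1)$, as the parameter.
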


\section{Summary of the Proof}
Our goal is to construct smooth initial data for \eqref{Skyrme Maps PDE} which will develop a singularity in finite time. We will find such initial data by exploiting the scaling invariance of \eqref{Skyrme Maps PDE}. That is, for any $\lambda\in\mathbb{R}-\{0\}$, \eqref{Skyrme Maps PDE} is invariant under the map $(t,r)\mapsto(\lambda t,\lambda r)$. Thus, we are motivated to find a self-similar solution $u(t,r)=w(-r/t)$ for some unknown $w:\mathbb{R}\to[0,\pi]$. For convenience, we define $\rho:=\dfrac{-r}{t}$. Such a nontrivial solution is constant along rays emanating from the origin of the Minkowski space and is thus multi-valued at the origin. This forces the derivative of $u$ to become unbounded and, consequently, a singularity develops. 

Substituting $w$ into \eqref{Skyrme Maps PDE} results in the following ordinary differential equation
\begin{equation}
w_{\rho\rho}+\frac{2}{\rho}w_\rho-\bigg[\frac{3\sin^2w}{\rho^2(1-\rho^2)}-w_\rho^2\bigg]\cot w=0. \label{Self Similar Skyrme ODE}
\end{equation}
We can modify \eqref{Self Similar Skyrme ODE} by setting $w=\cos^{-1} y$ for some unknown $y$, resulting in
\begin{equation}
y_{\rho\rho}+\dfrac{2}{\rho}y_\rho+\dfrac{3y\big(1-y^2\big)}{\rho^2\big(1-\rho^2\big)}=0. \label{Reduced Self Similar Skyrme Maps ODE}
\end{equation}
If we can find a smooth solution to \eqref{Reduced Self Similar Skyrme Maps ODE} for $\rho\in[0,1]$ satisfying the regularity conditions $y(0)=\pm1$ and $y(1)=0$, then we can use that solution to specify smooth initial data in $B_1(0)\subset\Sigma_{-1}$ where $\Sigma_{-1}$ denotes the Cauchy hypersurface at $t=-1$. We can look in the past light cone of the origin of the Minkowski space in order to deduce that the derivative of the solution blows up at the origin.

First, we will show that an $H^1$ solution of \eqref{Reduced Self Similar Skyrme Maps ODE} which is both continuous in $[0,1]$ and satisfies the regularity conditions is, in fact, a smooth solution of \eqref{Reduced Self Similar Skyrme Maps ODE} . Then, we will set up a variational problem for which the critical points of some functional are solutions to \eqref{Reduced Self Similar Skyrme Maps ODE}. We will show that this functional achieves its minimum in the space for which it is defined and that this minimum has the necessary properties to be smooth as stated above. 

\section{Proof of Main Result}
\begin{remark}
We point out for notational convenience that by $B_R$, we mean $B_R(0)\subset\mathbb{R}^3$, the open ball of radius $R$ centered at the origin. Whenever we write $H^1$ or other function spaces, we always mean those functions on the ball $B_1$ unless stated otherwise. Furthermore, we will recycle the letter $C$ to denote a generic constant in our inequalities. Unless the explicit nature of this constant is important, $C$ may be a different constant in each instance.
\end{remark}
\begin{lemma} 
Let $y\in H^1$ be a solution to \eqref{Reduced Self Similar Skyrme Maps ODE} such that $y\in C[0,1]$, $y(0)=\pm1$, and $y(1)=0$. Then $y\in C^\infty[0,1]$. \label{Smoothness Lemma}
\end{lemma}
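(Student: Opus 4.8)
The plan is to separate the problem into an interior regularity statement on the open interval $(0,1)$, where the coefficients of \eqref{Reduced Self Similar Skyrme Maps ODE} are smooth, and a boundary analysis at the two singular endpoints $\rho=0$ and $\rho=1$, where the factors $1/\rho^2$ and $1/(1-\rho^2)$ degenerate. For the interior I would treat \eqref{Reduced Self Similar Skyrme Maps ODE} as a semilinear elliptic equation for the radial function $y$ on an arbitrary annulus $\{\epsilon<|x|<1-\epsilon\}\subset B_1$. There the coefficient $3/(\rho^2(1-\rho^2))$ is bounded and smooth, and since $y\in C[0,1]$ is bounded the right-hand side $-3y(1-y^2)/(\rho^2(1-\rho^2))$ lies in $L^\infty$. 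Writing the equation as $\Delta y=g$ with $g\in L^\infty$ and bootstrapping with interior $W^{2,p}$ and Schauder estimates, using that the nonlinearity is polynomial in $y$, upgrades $y$ to $C^\infty$ on $(0,1)$. This step is routine.

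The substance of the lemma is the endpoint behavior, and here the hypotheses $y(0)=\pm1$ and $y(1)=0$ are exactly what tame the singular coefficients, since they force the numerator $y(1-y^2)$ to vanish where the denominators do. Near $\rho=0$ I would set $y=\pm1+v$ and linearize: because $1-y^2=\mp2v+O(v^2)$, equation \eqref{Reduced Self Similar Skyrme Maps ODE} becomes the perturbed Euler equation $v_{\rho\rho}+\frac{2}{\rho}v_\rho-\frac{6}{\rho^2}v=O(v^2/\rho^2)$, whose indicial equation $s^2+s-6=0$ has roots $s=2$ and $s=-3$. The root $s=-3$ yields a solution that is neither continuous nor in $H^1(B_1)$, since in three dimensions $\int_0|v_\rho|^2\rho^2\,d\rho$ diverges for $v\sim\rho^{-3}$; thus $y\in C[0,1]\cap H^1(B_1)$ with $y(0)=\pm1$ selects the regular branch $v\sim\rho^2$. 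Because the relevant coefficients expand in even powers of $\rho$, a Frobenius argument then produces a convergent expansion $y=\pm1+c\rho^2+\cdots$ in even powers, which is smooth at the origin.

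At $\rho=1$ I would substitute $\sigma=1-\rho$ and linearize around $y=0$; writing $1-\rho^2=\sigma(2-\sigma)$, the leading equation is $y_{\sigma\sigma}-2y_\sigma+\frac{3}{2\sigma}y=0$, a regular singular point with indicial roots $s=0$ and $s=1$. Since these differ by the integer $1$, Frobenius theory permits a logarithmic term in the solution attached to the smaller root $s=0$; indeed the recursion obstructs a pure power series at that root, which is the signature of such a log. The boundary condition $y(1)=0$ eliminates the constant solution together with its accompanying logarithm, leaving only the analytic branch $y\sim c(1-\rho)$, so $y$ vanishes linearly at $\rho=1$. Consequently $y(1-y^2)/(1-\rho^2)$ stays bounded up to the boundary, the right-hand side of the equation is bounded near $\partial B_1$, and elliptic boundary regularity with the smooth Dirichlet data $y|_{\partial B_1}=0$ bootstraps $y$ to $C^\infty$ up to $\rho=1$.

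The main obstacle is the endpoint analysis rather than the interior bootstrap: one must convert the qualitative hypotheses $y\in C[0,1]\cap H^1(B_1)$ and the prescribed endpoint values into the precise vanishing rates ($y\mp1=O(\rho^2)$ at the origin and $y=O(1-\rho)$ at $\rho=1$) that control the singular denominators, and then carry the nonlinearity through the Frobenius expansions, verifying that the quadratic and cubic corrections reintroduce neither a non-integrable singularity at $\rho=0$ nor a surviving logarithm at $\rho=1$. The $\rho=1$ endpoint is the most delicate precisely because the integer-spaced indicial roots make the exclusion of the logarithmic term, and hence the passage from merely Lipschitz to genuinely smooth behavior there, depend essentially on the boundary value $y(1)=0$ rather than on the energy bound alone.
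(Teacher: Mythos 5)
Your overall decomposition --- routine interior ellipticity on $(0,1)$ plus a separate analysis at each degenerate endpoint --- matches the paper's, and your identification of the indicial roots is correct at both ends: at $\rho=0$ the roots of $s^2+s-6=0$ are $2$ and $-3$ (the paper finds the same pair as $e^{2t}$ and $e^{-3t}$ after setting $\rho=e^t$), and at $\rho=1$ the roots $0$ and $1$, with the power series obstructed at the smaller root, are as you say. At $\rho=1$ your route is genuinely different from the paper's: the paper does no Frobenius analysis there, but instead uses $y(1)=0$ together with the Sobolev embedding $H^1\hookrightarrow C^{0,1/2}$ on an annulus to get $|y(\rho)|\le C(1-\rho)^{1/2}$, hence an $L^p$ bound on the nonlinearity, and then runs a $W^{2,p}$/Sobolev bootstrap. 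Your indicial argument, once supplemented by that same Sobolev rate $y=O\big((1-\rho)^{1/2}\big)$ to control the Duhamel term and kill the coefficient of the logarithmic branch, yields the sharper conclusion $y\sim c(1-\rho)$ and is arguably cleaner.

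The genuine gap is at $\rho=0$. For the nonlinear equation the solution is not a linear combination of the two indicial branches; it is $v=A\rho^2+B\rho^{-3}$ plus a variation-of-parameters term driven by a source of size $O(v^2/\rho^2)$, and to run the Frobenius/Duhamel argument you need a quantitative decay rate for $v=y\mp1$, not merely $v\to0$. Unlike at $\rho=1$, the hypotheses give no such rate for free: radial $H^1(B_1)$ in three dimensions yields only a Strauss-type growth bound $|v|\lesssim\rho^{-1/2}$ near the origin, and continuity gives $v=o(1)$ with no rate, so the quadratic correction is not a priori integrable against the weights $\rho^{-3}$ and $\rho^{2}$. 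Your closing paragraph correctly names this as the main obstacle but does not say how to overcome it. The paper's device is to multiply the shifted equation by $z=y\mp1$, integrate, and complete the square to obtain a Hardy-type bound $\int_0^\delta\big(z_\rho^2+z^2/\rho^2\big)\,d\rho<\infty$; this makes the source integrable in the logarithmic variable $t=\log\rho$ and lets the variation-of-parameters argument conclude $|z|\le\rho^2$, i.e.\ selects the $s=2$ branch. Some such a priori estimate (or an explicit iteration scheme starting from $v=o(1)$ alone) must be supplied before your Frobenius expansion at the origin is legitimate.
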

\begin{proof}
The only values of $\rho$ for which a solution of \eqref{Reduced Self Similar Skyrme Maps ODE} may not be smooth on the unit interval are $\rho=0$ and $\rho=1$. Since $y\in H^1$, then $y\in C^{0,\frac{1}{2}}(B_1\setminus B_\alpha)$ for some fixed $\alpha\in(0,1)$ by Sobolev embedding. For $\rho\in(\alpha,1]$,
\begin{align}
\begin{split}
	\big|y(1)-y(\rho)\big|&=\big|y(\rho)\big|
	\\
	&<C\big|1-\rho\big|^\frac{1}{2}. \label{Holder Continuous}
\end{split}
\end{align}
Now, define $h_y(\rho):=\dfrac{-3y(\rho)\big(1-y^2(\rho)\big)}{\rho^2(1-\rho^2)}$. Since $y\in C(\alpha,1]$, 
\begin{equation}
\big|h_y(\rho)\big|\leq C_\alpha\dfrac{|y(\rho)|}{1-\rho} \label{bound on nonlinearity}
\end{equation}
for some constant $C_\alpha$ depending on $\alpha$. Consequently, for any $p\in\mathbb{N}$,
\begin{align}
\begin{split}
	\int_\alpha^1|h_y|^p\;\rho^2d\rho&\leq C_\alpha\int_\alpha^1\frac{|y|^p}{|1-\rho|^p}d\rho
	\\
	&\leq C_\alpha\Bigg(\int_\alpha^1\bigg(\frac{|y|}{1-\rho}\bigg)^{\frac{p}{p+1}}\;d\rho\Bigg)^{p+1}\Bigg(\int_\alpha^11\;d\rho\Bigg)^{\frac{p}{p+1}}
	\\
	&\leq C_\alpha(1-\alpha)^{\frac{p}{p+1}}\Bigg(\int_\alpha^1(1-\rho)^{\frac{-p}{2(p+1)}}\;d\rho\Bigg)^{p+1}
	\\
	&<\infty.
\end{split}
\end{align}
So, $h_y\in L^p(B_1\setminus B_\alpha)$. 
Since \eqref{Reduced Self Similar Skyrme Maps ODE} can be rewritten as $\Delta y=h_y$ and $h_y\in L^p(B_1\setminus B_\alpha)$, we have that $y\in W^{2,p}(B_1\setminus B_\alpha)$. Further, $y\in C^{k,\beta}(B_1\setminus B_\alpha)$ for $k+\beta=p-\dfrac{3}{2}$ by Sobolev embedding. Thus, for any $k\in\mathbb{N}$, we can always find a $p$ which guarantees $y\in C^k(\alpha,1]$. Therefore, $y$ is a smooth function on the interval $(\alpha,1]$.

In order to show that $y$ is smooth at $\rho=0$, we change dependent variable. If $y(0)=1$, then we change to $z=y-1$. Similarly, if $y(0)=-1$, then we change to $z=y+1$. Each case is handled similarly with the appropriate change of sign. Without any loss of generality, we assume $y(0)=-1$ and change dependent variable to $z=y+1$. \eqref{Reduced Self Similar Skyrme Maps ODE} becomes:
\begin{align}
z_{\rho\rho}+\dfrac{2}{\rho}z_\rho-\dfrac{3z(z-1)\big(z-2\big)}{\rho^2\big(1-\rho^2\big)}=0 \label{Shifted Reduced Self Similar Skyrme Maps ODE}
\end{align}
with $z(0)=0$ and $z(1)=1$. Furthermore, since $y\in C[0,1]$, we also have that $z\in C[0,1]$. We will show that the nonlinearity in \eqref{Shifted Reduced Self Similar Skyrme Maps ODE} is integrable near $\rho=0$. Using this, we will show that the corresponding solution is smooth at $\rho=0$.

Multiplying \eqref{Shifted Reduced Self Similar Skyrme Maps ODE} by $z$ and integrating from some $\varepsilon$ to $\delta$, $0<\varepsilon<\delta<1$ yields
\begin{align}
\begin{split}
&\int_\varepsilon^\delta zz_{\rho\rho}+\dfrac{2}{\rho}zz_\rho-\dfrac{3z^2(z-1)\big(z-2\big)}{\rho^2\big(1-\rho^2\big)}\;\;d\rho
\\
&=\int_\varepsilon^\delta -z_{\rho}^2+\dfrac{2}{\rho}zz_\rho-\dfrac{3z^2(z-1)\big(z-2\big)}{\rho^2\big(1-\rho^2\big)}\;\;d\rho+\frac{1}{2}\partial_\rho(z^2)\big|_\varepsilon^\delta 
\\
&= 0. \label{Multiply by z and Integrate}
\end{split}
\end{align}
This implies
\begin{align}
\begin{split}
\int_\varepsilon^\delta z_{\rho}^2&-\dfrac{2}{\rho}zz_\rho+\dfrac{3z^2(z-1)\big(z-2\big)}{\rho^2\big(1-\rho^2\big)}\;\;d\rho = \frac{1}{2}\partial_\rho\big(z^2(\delta)\big)-\frac{1}{2}\partial_\rho\big(z^2(\varepsilon)\big) 
\\
&\leq \frac{1}{2}\partial_\rho\big(z^2(\delta)\big) \label{Multiply by z and Integrate to 0}
\end{split}
\end{align}
since $z^2(0)=0$. So, \eqref{Multiply by z and Integrate to 0} implies that we can take $\varepsilon\rightarrow0$. For any $a<b$ and $a>0$,
\begin{equation}
z_\rho^2-\frac{2}{\rho}zz_\rho+\dfrac{3z^2(z-1)\big(z-2\big)}{\rho^2\big(1-\rho^2\big)} = \Big(1-\frac{a^2}{b^2}\Big)z_\rho^2+\Big(\frac{a}{b}z_\rho-\frac{b}{a}\frac{z}{\rho}\Big)^2+\Bigg[\frac{3z^2(z-1)\big(z-2\big)}{\rho^2\big(1-\rho^2\big)}-\frac{b^2}{a^2}\frac{z^2}{\rho^2}\Bigg]. \label{Completing the Square}
\end{equation}
We can pick $\delta$ small enough so that $3z^2(z-1)\big(z-2\big)\approx6z^2$ since $z$ is continuous on $[0,\delta)$. The third term in \eqref{Completing the Square} becomes
\begin{align}
\begin{split}
&\frac{3z^2(z-1)\big(z-2\big)}{\rho^2\big(1-\rho^2\big)}-\frac{b^2}{a^2}\frac{z^2}{\rho^2} \approx \Big(\frac{6a^2-b^2(1-\delta^2)}{a^2(1-\delta^2)}\Big)\frac{z^2}{\rho^2}.
\end{split}
\end{align}
Define the set
\begin{align}
\mathbb{A}:=\Bigg\{(a,b)\in\mathbb{N}\times\mathbb{N}:0<a<b,\;\;\frac{6a^2-b^2(1-\delta^2)}{a^2(1-\delta^2)}>0\Bigg\}.
\end{align}
For any such $\delta$, it is possible to find a constant $C$ depending on $\delta$, such that
\begin{equation}
C(\delta)=\min_{(a,b)\in\mathbb{A}}\Bigg\{1-\frac{a^2}{b^2},\frac{6a^2-b^2(1-\delta^2)}{a^2(1-\delta^2)}\Bigg\}.
\end{equation}
Then, we can bound the first and third terms of \eqref{Completing the Square} from below by the following
\begin{align}
\begin{split}
z_\rho^2-\frac{2z_\rho  z}{\rho}+\frac{3z^2(z-1)(z-2)}{\rho^2(1-\rho^2)} &\geq \Big(1-\frac{a^2}{b^2}\Big)z_\rho^2+\Bigg[\frac{3z^2(z-1)\big(z-2\big)}{\rho^2\big(1-\rho^2\big)}-\frac{b^2}{a^2}\frac{z^2}{\rho^2}\Bigg] 
\\
&\geq C(\delta)\Big(z_\rho^2+\frac{z^2}{\rho^2}\Big). \label{Bounding Completed Square From Below}
\end{split}
\end{align}
This implies
\begin{align}
\begin{split}
&z_\rho^2+\frac{z^2}{\rho^2} \leq \frac{1}{C(\delta)}\Bigg[z_\rho^2-\frac{2z_\rho z}{\rho}+\frac{3z^2(z-1)\big(z-2\big)}{\rho^2\big(1-\rho^2\big)}\Bigg] \label{Bound for the Integral of the Nonlinearity}.
\end{split}
\end{align}
Further, \eqref{Multiply by z and Integrate to 0} and \eqref{Bound for the Integral of the Nonlinearity} imply
\begin{align}
\begin{split}
\int_0^\delta z_\rho^2+\frac{z^2}{\rho^2}\;\;d\rho&\leq\frac{1}{C(\delta)}\int_0^\delta z_\rho^2-\frac{2z_\rho z}{\rho}+\frac{3z^2(z-1)\big(z-2\big)}{\rho^2\big(1-\rho^2\big)}\;\;d\rho
\\
&\leq\frac{1}{2C(\delta)}\partial_\rho\big(z^2(\delta)\big). \label{Final Bound of the Integral of the Nonlinearity}
\end{split}
\end{align}
Thus, for any $\delta>0$ we pick $\alpha=\dfrac{1}{2}\delta$. By taking $\delta>0$, we guarantee that \eqref{Final Bound of the Integral of the Nonlinearity} is finite. This implies that
\begin{align}
\begin{split}
z_{\rho\rho}+\frac{2}{\rho}z_\rho-\frac{6z}{\rho^2}&=\frac{3z(z-1)\big(z-2\big)}{\rho^2\big(1-\rho^2\big)}-\frac{6z}{\rho^2}
\\
&=:\frac{f(z)}{\rho^2}
\\
&\leq C\frac{z^2}{\rho^2} \label{Equation is Bounded to z Squared}
\end{split}
\end{align}
is integrable on $[0,\delta)$. 

Now, we will show that $z$, the solution to \eqref{Shifted Reduced Self Similar Skyrme Maps ODE}, is smooth at $\rho=0$. Let $\rho=e^t$ for $t\in(-\infty,0]$. Then \eqref{Equation is Bounded to z Squared} becomes
\begin{equation}
z''+z'-6z=f(z) \label{Modified Shifted Reduced Self Similar Skyrme Maps ODE}
\end{equation}
where the prime now denotes derivative with respect to $t$. The solutions to the homogeneous problem are $e^{-3t}$ and $e^{2t}$. Variation of parameters tells us that the solution to \eqref{Modified Shifted Reduced Self Similar Skyrme Maps ODE} is
\begin{equation}
z(t)=\lim_{a\rightarrow-\infty}\Bigg[e^{-3t}\Big(A(a)+\int_a^tf\big(z(s)\big)e^{3s}\;\;ds\Big)-e^{2t}\Big(B(a)+\int_a^tf\big(z(s)\big)e^{-2s}\;\;ds\Big)\Bigg]. \label{Full Solution z}
\end{equation}
Introduce a new parameter $b<a$ in order to rewrite \eqref{Full Solution z} as 
\begin{equation}
z(t)=\lim_{a\rightarrow-\infty}\Bigg[e^{-3t}\Big(A(a) + \int_a^bf\big(z(s)\big)e^{3s}\;\;ds +\int_b^tf\big(z(s)\big)e^{3s}\;\;ds\Big)-e^{2t}\Big(B(a)+\int_a^tf\big(z(s)\big)e^{-2s}\;\;ds\Big)\Bigg]. \label{Full Solution z with b}
\end{equation}
We look at the limit $b\rightarrow-\infty$ and then $t\rightarrow-\infty$. First, note that $\lim_{t\rightarrow-\infty}z(t)=0$ since $z$ is assumed to be continuous. Clearly,
\begin{equation}
e^{2t}\Big(B(a)+\int_a^tf\big(z(s)\big)e^{-2s}\;\;ds\Big)\rightarrow0 \label{Second Term Vanishes}
\end{equation}
as $t\rightarrow-\infty$. Also, 
\begin{align}
\begin{split}
e^{-3t}\int_b^t\Big|f\big(z(s)\big)\Big|e^{3s}\;\;ds&\leq\int_b^t\Big|f\big(z(s)\big)\Big|\;\;ds
\\
&\rightarrow0 \label{First Second Term Vanishes}
\end{split}
\end{align}
as $b\rightarrow-\infty$ and $t\rightarrow-\infty$ since $f$ is a polynomial in $z$. Further,
\begin{equation}
 \int_a^bf\big(z(s)\big)e^{3s}\;\;ds\rightarrow0
\end{equation}
as $b\rightarrow-\infty$. So, it must be the case that
\begin{equation}
e^{-3t}A(a)\rightarrow0 \label{First First Term Vanishes}
\end{equation}
as $t\rightarrow-\infty$ by the continuity of $z$. Since $A$ is independent of $t$, $A\equiv0$. Now, examine the first derivative of $z$,
\begin{equation}
z'(t)=-3e^{-3t}\int_{-\infty}^tf\big(z(s)\big)e^{3s}\;\;ds-2e^{2t}\lim_{a\rightarrow-\infty}\Bigg(B(a)+\int_{a}^tf\big(z(s)\big)e^{-2s}\;\;ds\Bigg).
\end{equation}
As $t\rightarrow-\infty$, the second term goes to $0$ due to \eqref{Second Term Vanishes}. The first term goes to $0$ due to \eqref{First Second Term Vanishes}. So, $z'(t)\rightarrow0$ as $t\rightarrow-\infty$. This can only be the case if the solution is on the unstable manifold of \eqref{Modified Shifted Reduced Self Similar Skyrme Maps ODE}, implying $|e^{-2t}z(t)|<1$ for sufficiently small $t$. Thus, in a small neighborhood around $0$, $|z(\rho)|\leq\rho^2$ implying that $z$ is $C^1[0,\delta)$ and consequently a smooth function of $\rho$ in that neighborhood. Combining this with the result from $(\alpha,1]$, we obtain that $y$ is a smooth function of $\rho\in[0,1]$.
\end{proof}

Next, we will find a solution to \eqref{Reduced Self Similar Skyrme Maps ODE} which satisfies the hypotheses of Lemma \ref{Smoothness Lemma}. So, we will consider a variational problem with the functional
\begin{equation}
J[\psi]=\frac{1}{2}\int_0^1\Bigg[\psi_\rho^2-\frac{1}{\rho^2(1-\rho^2)}3\psi^2\Big(1-\frac{1}{2}\psi^2\Big)\Bigg]\;\;\rho^2d\rho, \label{Reduced Self Similar Skyrme Maps Minimization Functional}
\end{equation}
defined over the space
\begin{equation}
X:=\Big\{\psi\in H^1:\psi\; \text{radial},\;\psi(1)=0\Big\}. \label{space of functions}
\end{equation}
It is a routine calculation to show that critical points of \eqref{Reduced Self Similar Skyrme Maps Minimization Functional} satisfy \eqref{Reduced Self Similar Skyrme Maps ODE}. We choose to regularize $J$ by considering the functional
\begin{align}
\begin{split}
&J[\psi]=\frac{1}{2}\int_0^1\Bigg[\psi_\rho^2+\frac{1}{\rho^2(1-\rho^2)}F(\psi)\Bigg]\;\;\rho^2d\rho, \label{Regularized Reduced Self Similar Skyrme Maps Minimization Functional}
\\
&F(\psi)=
\begin{cases}
-3\psi^2\Big(1-\frac{1}{2}\psi^2\Big);  &|\psi|<1
\\
\varphi(\psi); &\text{otherwise}
\\
0; &|\psi|\geq\sqrt{2}
\end{cases}
\end{split}
\end{align}
where $\varphi(\psi)$ is a smooth function of $\rho\in[0,1]$ for any $\psi\in X$ such that $\varphi$ increases(decreases) monotonically to(from) $-3\psi^2\Big(1-\frac{1}{2}\psi^2\Big)$ from(to) $0$ for values of $\rho$ in which $1\leq|\psi(\rho)|\leq\sqrt{2}$. Along the way, we will show that our result is independent of the regularization we made.

\begin{lemma}
$J$ is a $C^1$ functional on $X$ that is bounded from below. In particular, $J$ and its first derivative are Lipschitz continuous on $X$. \label{C^1 Functional Bounded Below Lemma}
\end{lemma}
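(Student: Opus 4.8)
The plan is to split $J$ into its Dirichlet part $\tfrac{1}{2}\int_0^1\psi_\rho^2\,\rho^2\,d\rho$ and its potential part, where the weight $\rho^{-2}$ cancels the volume element to leave $\tfrac{1}{2}\int_0^1\frac{F(\psi)}{1-\rho^2}\,d\rho$, and to control every term through two ingredients: the global regularity of the truncated nonlinearity $F$ from \eqref{Regularized Reduced Self Similar Skyrme Maps Minimization Functional}, and a Hardy inequality at each endpoint of $[0,1]$. First I would record the elementary properties of $F$: since $F$ is smooth on each piece, glued to be $C^2$, and identically zero for $|\psi|\ge\sqrt{2}$, the functions $F$, $F'$, $F''$ are all globally bounded and $F'$ is globally Lipschitz; moreover $-3\psi^2\le F(\psi)\le 0$ and $|F'(\psi)|\le C|\psi|$ hold for every $\psi$. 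This global boundedness is exactly what the truncation buys and will be used at every step, since a radial $H^1(B_1)$ function need not be bounded near the origin and the untruncated quartic integrand would already fail to be integrable there for the most singular admissible $\psi$. The two Hardy inequalities I would establish are the three-dimensional Hardy inequality $\int_0^1\psi^2\,d\rho\le 4\int_0^1\psi_\rho^2\,\rho^2\,d\rho$, valid because $\psi(1)=0$ puts $\psi\in H^1_0(B_1)$, and the endpoint estimate $\int_{1-\eta}^1\frac{\psi^2}{1-\rho^2}\,d\rho\le\eta\int_{1-\eta}^1\psi_\rho^2\,d\rho$, which follows from $\psi(\rho)=-\int_\rho^1\psi_s\,ds$ and Cauchy--Schwarz and whose constant is small with $\eta$ because the pole of $(1-\rho^2)^{-1}$ at $\rho=1$ is only simple.

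With these in hand, boundedness below is the first substantive step. Splitting the potential at $\rho=1-\eta$, on $[0,1-\eta]$ the weight is bounded and $|F|\le\tfrac{3}{2}$, so that portion is bounded below by a constant $C_\eta$; on the collar $[1-\eta,1]$ I use $F(\psi)\ge -3\psi^2$ together with the endpoint inequality to bound the potential below by $-\tfrac{3\eta}{2(1-\eta)^2}\int_0^1\psi_\rho^2\,\rho^2\,d\rho$. Choosing $\eta$ small enough that this exceeds $-\tfrac14\int_0^1\psi_\rho^2\,\rho^2\,d\rho$ leaves the Dirichlet term dominant and gives $J[\psi]\ge\tfrac14\int_0^1\psi_\rho^2\,\rho^2\,d\rho-C_\eta\ge -C_\eta$.

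Next I would compute the G\^ateaux derivative $\langle J'[\psi],\phi\rangle=\int_0^1\psi_\rho\phi_\rho\,\rho^2\,d\rho+\tfrac12\int_0^1\frac{F'(\psi)\phi}{1-\rho^2}\,d\rho$ and verify it is a bounded linear functional: using $|F'(\psi)|\le C|\psi|$, Cauchy--Schwarz, the origin inequality near $\rho=0$, and the endpoint inequality near $\rho=1$, one gets $|\langle J'[\psi],\phi\rangle|\le C\|\psi\|_X\|\phi\|_X$. Fr\'echet differentiability then follows from the Taylor bound $|F(\psi+\phi)-F(\psi)-F'(\psi)\phi|\le\tfrac12\|F''\|_\infty\phi^2$, which, inserted into the potential and combined with $\int_0^1\frac{\phi^2}{1-\rho^2}\,d\rho\le C\|\phi\|_X^2$ (again the two Hardy inequalities), makes the potential remainder $O(\|\phi\|_X^2)$, while the Dirichlet remainder is exactly $\tfrac12\int_0^1\phi_\rho^2\,\rho^2\,d\rho$. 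Lipschitz continuity of $J'$ comes out the same way from $|F'(\psi_1)-F'(\psi_2)|\le\|F''\|_\infty|\psi_1-\psi_2|$, yielding $\|J'[\psi_1]-J'[\psi_2]\|_{X^*}\le C\|\psi_1-\psi_2\|_X$ globally, since $\psi_1-\psi_2$ also vanishes at $\rho=1$ and the Hardy inequalities apply to it verbatim. For $J$ itself the potential difference is handled globally via $|F(\psi_1)-F(\psi_2)|\le\|F'\|_\infty|\psi_1-\psi_2|$, whereas the quadratic Dirichlet term is Lipschitz only on bounded subsets of $X$ (with constant proportional to the radius); thus $J$ is Lipschitz on bounded sets and $J'$ is globally Lipschitz.

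The step I expect to be the main obstacle is reconciling the two singular weights with the fact that $X$ carries only $H^1$, not $L^\infty$, control: near the origin no pointwise bound on $\psi$ is available, so every estimate there must route through the truncation (which keeps $F$, $F'$, $F''$ bounded no matter how large $\psi$ grows) and the three-dimensional Hardy inequality, the latter being legitimate only because $\psi(1)=0$. A second delicate point is the boundedness-below step, where a crude use of $F\ge -3\psi^2$ produces a negative multiple of the Dirichlet energy that could swamp it; it is essential that the pole of $(1-\rho^2)^{-1}$ be simple, so that the governing Hardy constant is small on a thin collar and the Dirichlet term stays dominant.
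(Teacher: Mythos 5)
Your proposal is correct and reaches the same conclusions by a genuinely different route. The paper controls the singular potential term with H\"older's inequality plus Sobolev embeddings: near the origin it invokes $H^1(B_1)\hookrightarrow L^6(B_1)$ to absorb the $\rho^{-2}$ weight (see \eqref{Bounding the Functional from 0 to alpha}; the factor $\int_0^\alpha\rho^{-5/2}\,d\rho$ there is a typo for an integrable power), and near $\rho=1$ it uses a fractional H\"older split resting on the $C^{0,1/2}$ embedding away from the origin and on $u(1)=v(1)=0$ to tame the simple pole down to $(1-\rho)^{-15/16}$; boundedness from below is then extracted from the Lipschitz estimate with $v\equiv0$, giving $J[u]\ge C\big(\|u_\rho\|^2-\|u_\rho\|\big)$. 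You replace both embeddings with Hardy-type inequalities --- the three-dimensional Hardy inequality at the origin and the collar estimate $\int_{1-\eta}^1\psi^2(1-\rho^2)^{-1}\,d\rho\le\eta\int_{1-\eta}^1\psi_\rho^2\,d\rho$ at $\rho=1$ --- which is more elementary, produces explicit small constants, and yields coercivity $J[\psi]\ge\tfrac14\|\psi_\rho\|_{L^2(\rho^2 d\rho)}^2-C_\eta$ directly rather than as a by-product. Two of your points sharpen the paper's write-up: you route every pointwise bound on the nonlinearity through the truncation $F$, which is genuinely necessary (the paper's bound $|(u+v)(1-\tfrac12(u^2+v^2))|\le C_1$ in \eqref{Bounding the Functional} fails for general radial $H^1$ functions without the cutoff), and you correctly observe that the quadratic Dirichlet term makes $J$ itself Lipschitz only on bounded subsets of $X$ while $J'$ is globally Lipschitz --- the paper's display \eqref{J is Continuous} silently drops the Dirichlet difference, so its claim of global Lipschitz continuity of $J$ should be read as Lipschitz continuity on bounded sets. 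Either version of the lemma suffices for the minimization argument that follows, since minimizing sequences are bounded.
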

\begin{proof}
For any $u,v\in X$,
\begin{align}
\begin{split}
\frac{1}{2}\Bigg|\int_0^1\frac{3}{\rho^2(1-\rho^2)}\bigg[u^2\Big(1-\frac{1}{2}u^2\Big)-v^2\Big(1&-\frac{1}{2}v^2\Big)\bigg]\;\;\rho^2d\rho\Bigg| \leq C\int_0^1\frac{|u-v|}{\rho^2(1-\rho)}\bigg|(u+v)\Big(1-\frac{1}{2}(u^2+v^2)\Big)\bigg|\;\;\rho^2d\rho
\\
&\leq C\int_0^1\frac{|u-v|}{\rho^2(1-\rho)}\;\;\rho^2d\rho. \label{Bounding the Functional}
\end{split}
\end{align}
Integrating from $0$ to $\beta$ with $\beta\in(0,1)$,
\begin{align}
\begin{split}
\int_0^\beta\frac{|u-v|}{\rho^2(1-\rho)}\;\;\rho^2d\rho &\leq C\Bigg(\int_0^\beta|u-v|^6\;\;\rho^2d\rho\Bigg)^{1/6}\Bigg(\int_0^\beta \rho^{-5/2} d\rho\Bigg)^{5/6} 
\\
&\leq C\|u_\rho-v_\rho\|_{L^2} \label{Bounding the Functional from 0 to alpha}
\end{split}
\end{align}
and from $\beta$ to $1$
\begin{align}
\begin{split}
\int_\beta^1\frac{|u-v|}{\rho^2(1-\rho)}\;\;\rho^2d\rho &\leq C\Bigg(\int_\beta^1|u-v|^6\;\;\rho^2d\rho\Bigg)^{1/9}\Bigg(\int_\beta^1 \frac{|u-v|^{3/8}}{(1-\rho)^{9/8}}\;\;d\rho\Bigg)^{8/9}
\\
&\leq C\|u_\rho-v_\rho\|_{L^2}\Bigg(\int_\beta^1 \frac{d\rho}{(1-\rho)^{15/16}}\Bigg)^{8/9} 
\\
&\leq C\|u_\rho-v_\rho\|_{L^2}. \label{Bounding the Functional from alpha to 1}
\end{split}
\end{align}
This implies
\begin{align}
\begin{split}
\Big|J[u]-J[v]\Big|&<C\|u_\rho-v_\rho\|_{L^2}
\\
&\leq C\|u_\rho-v_\rho\|_{H^1}.\label{J is Continuous}
\end{split}
\end{align}
Thus, $J$ is Lipschitz continuous on $X$. Further,
\begin{align}
\int_0^1\Bigg|\frac{u(1-u^2)-v(1-v^2)}{\rho^2(1-\rho^2)}\Bigg|\;\;\rho^2d\rho\leq C\int_0^1\frac{|u-v|}{\rho^2(1-\rho)}\;\;\rho^2d\rho.
\end{align}
So, \eqref{Bounding the Functional from 0 to alpha} and \eqref{Bounding the Functional from alpha to 1} imply that $J$ is $C^1$ on $X$ and, more specifically, $J'$ is Lipschitz continuous on $X$.

Now, compute \eqref{J is Continuous} with $v\equiv0$. The following holds:
\begin{align}
\begin{split}
J[u]&=C\|u_\rho\|_{L^2}^2-\frac{1}{2}\int_0^1\frac{3}{\rho^2(1-\rho^2)}u^2\Big(1-\frac{1}{2}u^2\Big)\;\;\rho^2d\rho
\\
&\geq C\big(\|u_\rho\|_{L^2}^2-\|u_\rho\|_{L^2}\big). \label{Functional is Bounded Below}
\end{split}
\end{align}
Thus, $J$ is bounded from below.
\end{proof}

\begin{lemma}
If $y\in X$ is a minimizer of $J$, then $y(0)\neq0$. \label{y(0) lemma}
\end{lemma}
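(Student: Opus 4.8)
The plan is to argue by contradiction. Assuming $y(0)=0$, I will produce a compactly supported radial perturbation, concentrated near the origin, along which the second variation of $J$ is strictly negative. Since $y$ minimizes $J$ over $X$ and $J$ is $C^1$ on $X$ (the preceding lemma), the map $s\mapsto J[y+s\phi]$ attains a global minimum at $s=0$ for every admissible direction $\phi$, which forces a vanishing first variation and a nonnegative second variation; a negative second variation therefore contradicts minimality. The conceptual reason such a direction should exist is that near $\rho=0$ the potential in $J$ behaves like $-3\psi^2/\rho^2$, whose coefficient $3$ exceeds the sharp constant $\tfrac14$ in the three-dimensional weighted Hardy inequality $\int_0^1\psi_\rho^2\rho^2\,d\rho\ge\tfrac14\int_0^1\rho^{-2}\psi^2\rho^2\,d\rho$. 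Equivalently, the indicial equation $\sigma^2+\sigma+3=0$ for the linearization of \eqref{Reduced Self Similar Skyrme Maps ODE} at $y=0$ has the complex roots $\sigma=-\tfrac12\pm\tfrac{i\sqrt{11}}{2}$, so any solution tending to $0$ at the origin must oscillate and cannot do so while remaining in $H^1$. This instability is precisely what the variational computation will detect.

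Concretely, here is how I would carry it out. Assuming $y(0)=0$, continuity of $y$ at the origin gives, for any $\epsilon>0$, a radius $\lambda\in(0,1)$ with $|y(\rho)|<\epsilon$ on $[0,\lambda]$. For such $\lambda$ I take the rescaled linear cutoff $\phi_\lambda(\rho):=\max\{1-\rho/\lambda,\,0\}$, which is radial, lies in $H^1(B_1)$, and vanishes for $\rho\ge\lambda$, so that $\phi_\lambda(1)=0$ and both $\phi_\lambda$ and $y+s\phi_\lambda$ belong to $X$. Because $|y|<\epsilon<1$ and $|\phi_\lambda|\le1$ on the support, $|y+s\phi_\lambda|<1$ for $|s|$ small, so along this path $F$ coincides with its smooth branch $-3\psi^2(1-\tfrac12\psi^2)$ and $g(s):=J[y+s\phi_\lambda]$ is smooth near $s=0$ with
\begin{equation}
g''(0)=\int_0^\lambda\phi_{\lambda,\rho}^2\,\rho^2\,d\rho+\frac12\int_0^\lambda\frac{F''(y)}{\rho^2(1-\rho^2)}\,\phi_\lambda^2\,\rho^2\,d\rho,\qquad F''(y)=-6+18y^2.
\end{equation}
Using $\phi_{\lambda,\rho}=-1/\lambda$ gives $\int_0^\lambda\phi_{\lambda,\rho}^2\rho^2\,d\rho=\lambda/3$, while $-6+18y^2\le-6+18\epsilon^2$ together with $(1-\rho^2)^{-1}\ge1$ yields $\tfrac12\int_0^\lambda\frac{F''(y)}{\rho^2(1-\rho^2)}\phi_\lambda^2\rho^2\,d\rho\le(-3+9\epsilon^2)\int_0^\lambda\phi_\lambda^2\,d\rho=(-3+9\epsilon^2)\lambda/3$. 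Hence
\begin{equation}
g''(0)\le\frac\lambda3\big(1-3+9\epsilon^2\big)=\frac\lambda3\big(-2+9\epsilon^2\big)<0
\end{equation}
once $\epsilon$ is fixed small (for instance $\epsilon^2<\tfrac29$). This contradicts $g''(0)\ge0$, so $y(0)\ne0$.

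The main obstacle is less the computation than the two regularity points that make it legitimate. First, I must know that the hypothesis $y(0)=0$ really delivers $|y|<\epsilon$ on a full interval $[0,\lambda]$; for a radial $H^1(B_1)$ function in three dimensions this is not automatic, since such functions may be unbounded at the origin. I am therefore reading $y(0)=0$ as the statement that $y$ has limit $0$ at $\rho=0$, and if only the $H^1$ structure were available I would instead replace the pointwise bound by the averaged estimate $\lambda^{-1}\int_0^\lambda y^2\,d\rho\to0$ (valid at a Lebesgue point of value $0$) and absorb the error term $9\int_0^\lambda\frac{y^2\phi_\lambda^2}{1-\rho^2}\,d\rho$ into the gap of size $2\lambda/3$ that way. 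Second, I must justify that $g$ is genuinely twice differentiable at $s=0$ with differentiation passing under the integral; this is where confining $\phi_\lambda$ to the region $|y|<1$ (so that $F$ is polynomial and the weight $\rho^{-2}(1-\rho^2)^{-1}\phi_\lambda^2\rho^2$ is integrable on $[0,\lambda]$) does the work. Once these points are in place, the sub-Hardy sign computation closes the argument, and the definite gap $-2/3$ appearing before the $\epsilon$-corrections reflects that the potential coefficient $3$ sits comfortably above the Hardy threshold $\tfrac14$.
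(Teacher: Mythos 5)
Your proof is correct, but it takes a genuinely different route from the paper. The paper proceeds in two stages: it first recasts \eqref{Reduced Self Similar Skyrme Maps ODE} as a three-dimensional autonomous dynamical system, computes the spectrum of the linearization at the equilibrium $(0,0,0)$ (the same complex indicial roots $\tfrac12(-1\pm i\sqrt{11})$ you identify), and argues via the one-dimensionality of the unstable manifold that the only solution with $y(0)=0$ is $y_*\equiv 0$; it then rules out $y_*$ by exhibiting a direction of negative second variation at $y_*$, citing Shatah--Tahvildar-Zadeh for a test function $\bar\eta$ violating the Hardy-type inequality with the weight $\rho^{-2}(1-\rho^2)^{-1}$ on all of $[0,1]$. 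You collapse both stages into one: by testing the second variation directly at the (unknown) minimizer $y$ with an explicit linear cutoff concentrated on $[0,\lambda]$ where $|y|<\epsilon$, you get $g''(0)\le \tfrac{\lambda}{3}(-2+9\epsilon^2)<0$ without ever needing to know that $y(0)=0$ forces $y\equiv 0$, and without importing a test function from the literature. This buys robustness --- you avoid the paper's uniqueness argument for the degenerate equilibrium, which is the most delicate step of its proof --- at the cost of needing the hypothesis ``$y(0)=0$'' to mean that $y$ tends to $0$ at the origin (radial $H^1(B_1)$ functions in three dimensions need not be bounded there); you flag this correctly, and in the paper's logical architecture that continuity is exactly what the dynamical-systems analysis is also used to supply in Lemma \ref{Minimum Attained and Smoothness Achieved}. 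Your computation itself checks out: $F''(\psi)=-6+18\psi^2$, $\int_0^\lambda\phi_{\lambda,\rho}^2\rho^2\,d\rho=\int_0^\lambda\phi_\lambda^2\,d\rho=\lambda/3$, the path $s\mapsto J[y+s\phi_\lambda]$ is polynomial in $s$ because $F$ is polynomial on the support where $|y+s\phi_\lambda|<1$, and global minimality over the linear space $X$ forces $g''(0)\ge 0$. The one loose point is the fallback Lebesgue-point estimate $\lambda^{-1}\int_0^\lambda y^2\,d\rho\to 0$, which uses a one-dimensional normalization that does not match the three-dimensional Lebesgue-point condition; but this is a side remark, not the main argument, and the principal reading of the hypothesis makes the proof complete.
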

\begin{proof}
Since $y\in X$ is a minimizer of \eqref{Regularized Reduced Self Similar Skyrme Maps Minimization Functional}, it satisfies the Euler-Lagrange equation \eqref{Reduced Self Similar Skyrme Maps ODE}. We can convert \eqref{Reduced Self Similar Skyrme Maps ODE} to the three-dimensional, autonomous smooth dynamical system:
\begin{align}
\begin{bmatrix}
\dot{y} 
\\
\dot{q}
\\
\dot{\rho}
\end{bmatrix}
=
\begin{bmatrix}
\big(1-\rho^2\big)q
\\
\big(\rho^2-1\big)q-3y\big(1-y^2\big)
\\
\rho\big(1-\rho^2\big)
\end{bmatrix}
=:\dot{Y}(y,q,\rho)
\end{align}
where $q=\rho y_\rho$ and the dot represents derivative with respect to the independent variable found by solving $\dot{\rho}=\rho(1-\rho^2)$. This smooth dynamical system has equilibrium points:
\begin{equation}
(y,q,\rho)\in\big\{(0,0,0),(\pm1,0,0),(\pm1,\tilde{q},1),(0,\tilde{q},1):\tilde{q}\in\mathbb{R}\big\}.
\end{equation}

Our goal is to exclude any solution with $y(0)=0$. We do this by showing that the only solution with $y(0)=0$ is the constant solution $y_*\equiv0$. 
The eigenvalues of $D\dot{Y}(0,0,0)$ are 
\begin{equation}
\Big\{\dfrac{1}{2} (-1 + i\sqrt{11}), \dfrac{1}{2} (-1 - i\sqrt{11}), 1\Big\}
\end{equation}
with corresponding eigenvectors
\begin{equation}
\Bigg\{\begin{bmatrix}\dfrac{1}{6} (-1 - i\sqrt{11})\\1\\0\end{bmatrix}, \begin{bmatrix}\dfrac{1}{6} (-1 + i\sqrt{11})\\ 1\\0\end{bmatrix}, \begin{bmatrix}0\\0\\1\end{bmatrix}\Bigg\}.
\end{equation}
Thus, the unstable manifold of the equilibrium point $(0,0,0)$ is the line defined by $(0,0,\rho)$ for $\rho\in[0,1]$. By the uniqueness of solutions to autonomous dynamical systems, we know that $y=y_*$ is, in fact, the only solution with $y(0)=0$. For if it were not, then any other solution, namely $z$, will have an orbit tangent to $y_*$ only at $\rho=0$. In order for $z$ to not equal $y_*$, the orbit of $z$ must diverge from that of $y_*$. But this cannot be the case since the unstable manifold at $(0,0,0)$ is one-dimensional.

Now, any solution of \eqref{Reduced Self Similar Skyrme Maps ODE} satisfies $y(0)=\pm1$ or is $y_*\equiv0$. We rule out $y_*$ by showing that it is not a minimizer of $J$. First, we notice that $J[y_*]\equiv0$. We can construct a variation of $y_*$ with a smaller value of $J$ by examining the second derivative of $J$ at any $\bar{y}\in X$, $\eta\in C_0^\infty(B_1)$:
\begin{equation}
\frac{d^2}{d\varepsilon^2}J[\bar{y}+\varepsilon\eta]\Big|_{\varepsilon=0}
=\int_0^1\eta_\rho^2\;\;\rho^2d\rho-6\int_0^1(1-3\bar{y}^2)\frac{\eta^2}{\rho^2(1-\rho^2)}\;\;\rho^2d\rho.
\end{equation}
Taking $\bar{y}=y_*$, we get
\begin{equation}
\frac{d^2}{d\varepsilon^2}J[y_*+\varepsilon\eta]\Big|_{\varepsilon=0}
=\int_0^1\eta_\rho^2\;\;\rho^2d\rho-6\int_0^1\frac{\eta^2}{\rho^2(1-\rho^2)}\;\;\rho^2d\rho.
\end{equation}
In \cite{SS_Cauchy}, it is shown that there is an $\bar{\eta}\in C_0^\infty(B_1)$ such that
\begin{equation}
\int_0^1\bar{\eta}_\rho^2\;\;\rho^2d\rho
<6\int_0^1\frac{\bar{\eta}^2}{\rho^2(1-\rho^2)}\;\;\rho^2d\rho.
\end{equation}
Thus,
\begin{equation}
\frac{d^2}{d\varepsilon^2}J[y_*+\varepsilon\bar{\eta}]\Big|_{\varepsilon=0}
< 0.
\end{equation}
This implies that $J[y_*+\varepsilon\bar{\eta}]<J[y_*]$. Since $y_*+\varepsilon\bar{\eta}\in X$, $y_*$ cannot be a minimizer of $J$. Therefore, no minimizer of $J$ will satisfy $y(0)=0$ and, subsequently, it must be the case that $y(0)=\pm1$.
\end{proof}

\begin{remark}
Lemma \ref{y(0) lemma} also proves that a minimizer $y$ of $J$, if it exists, satisifies $J[y]<0$.
\end{remark}
\begin{lemma}
Let $y\in X$ be a minimizer of $J$ with $y(0)=\pm1$ and $y(1)=0$. Then $y$ is monotone.\label{monotone lemma}
\end{lemma}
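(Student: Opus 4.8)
The plan is to determine the range of $y$ with variational competitors and then exclude interior local minima using the ODE \eqref{Reduced Self Similar Skyrme Maps ODE} that $y$ satisfies as a critical point of $J$. Both the functional \eqref{Regularized Reduced Self Similar Skyrme Maps Minimization Functional} and equation \eqref{Reduced Self Similar Skyrme Maps ODE} are invariant under $\psi\mapsto-\psi$, so I may assume $y(0)=1$, the case $y(0)=-1$ following by passing to $-y$. Since $y(0)=1$ and $y(1)=0$, here monotone means monotone decreasing, and it suffices to prove $0<y<1$ on $(0,1)$ together with the absence of any interior local minimum.

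\emph{Step 1 (range $|y|\le1$).} First I would truncate. Let $\tilde y:=\mathrm{sgn}(y)\min(|y|,1)$, which lies in $X$, matches $y$ at $\rho=0,1$, and satisfies $\tilde y_\rho^2\le y_\rho^2$ almost everywhere. The regularized potential $F$ obeys $F\ge-\tfrac32$ with equality exactly at $|\psi|=1$; hence on $\{|y|>1\}$ one has $F(\tilde y)=-\tfrac32\le F(y)$, and since the weight $1/(\rho^2(1-\rho^2))$ is positive this yields $J[\tilde y]\le J[y]$, strictly unless $|y|\le1$ a.e. Minimality then forces $|y|\le1$, so $y$ lives in the unregularized regime and therefore solves \eqref{Reduced Self Similar Skyrme Maps ODE} exactly; by Lemma \ref{Smoothness Lemma}, $y$ is smooth.

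\emph{Step 2 (positivity $0<y<1$).} Next I would reflect. Because $F$ is even and $\bigl|\,|y|_\rho\bigr|=|y_\rho|$ a.e., we have $J[|y|]=J[y]$, so $|y|$ is also a minimizer with $|y|(0)=1$ and $|y|(1)=0$, hence smooth by Lemma \ref{Smoothness Lemma}. If $y$ vanished at some $\rho_0\in(0,1)$ with $y_\rho(\rho_0)\ne0$, then $|y|$ would have a corner at $\rho_0$, contradicting its smoothness; and if $y(\rho_0)=y_\rho(\rho_0)=0$, then uniqueness for \eqref{Reduced Self Similar Skyrme Maps ODE} (which is regular on $(0,1)$) forces $y\equiv0$, contradicting $y(0)=1$. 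Thus $y>0$ on $[0,1)$. The same uniqueness argument, applied at a hypothetical point where $y=1$ and $y_\rho=0$, rules out $y\equiv1$ and gives $y<1$ on $(0,1]$, so $0<y<1$ on $(0,1)$.

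\emph{Step 3 (monotonicity) and main obstacle.} Finally I would invoke \eqref{Reduced Self Similar Skyrme Maps ODE} at interior critical points: if $\rho^*\in(0,1)$ and $y_\rho(\rho^*)=0$, then
\begin{equation}
y_{\rho\rho}(\rho^*)=-\frac{3\,y(\rho^*)\bigl(1-y(\rho^*)^2\bigr)}{(\rho^*)^2\bigl(1-(\rho^*)^2\bigr)}<0,
\end{equation}
since $0<y(\rho^*)<1$, so every interior critical point is a strict local maximum and there is no interior local minimum. If $y$ were not decreasing there would be $\rho_a<\rho_b$ with $y(\rho_a)<y(\rho_b)$; then $\min_{[0,\rho_b]}y\le y(\rho_a)$ lies strictly below both $y(0)=1$ and $y(\rho_b)$, so it is attained at an interior point of $(0,\rho_b)$, producing an interior local minimum and a contradiction. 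Hence $y$ is monotone. I expect the real work to lie in Steps 1 and 2: making the truncation and reflection comparisons airtight against the regularized, non-monotone potential $F$ and the singular weight (admissibility in $X$, the correct sign of the energy change, and the careful use of ODE uniqueness to exclude the degenerate tangential contact $y=y_\rho=0$). Once $0<y<1$ and smoothness are secured, Step 3 is a short maximum-principle argument.
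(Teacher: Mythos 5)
Your proof is correct, but it follows a genuinely different route from the paper's. The paper argues by direct contradiction: assuming non-monotonicity, it constructs explicit rearranged competitors (flattening a dip over $[a,b]$, reflecting a sign-changing portion, or capping the function at $1$) and compares the values of $F$ pointwise to conclude $J[\tilde y]<J[y]$. You instead split the work: truncation and reflection competitors pin down the range $0<y<1$ on $(0,1)$ (using that $F\ge-\tfrac32$ with equality exactly at $|\psi|=1$, and that a sign change would give $|y|$ a corner while $|y|$ is itself a smooth minimizer), and then monotonicity follows from a short second-derivative-test argument: by \eqref{Reduced Self Similar Skyrme Maps ODE}, every interior critical point with $0<y<1$ satisfies $y_{\rho\rho}<0$, so there is no interior local minimum. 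Your approach leans more heavily on the machinery already available in the paper (Lemma \ref{Smoothness Lemma} and ODE uniqueness at regular interior points), but Step 3 is notably more robust than the paper's competitor construction: since $F$ \emph{decreases} as $|\psi|$ grows toward $1$, flattening a dip (the paper's Case 1) lowers the gradient term but \emph{raises} the potential term, so the pointwise comparison of $F$ there is delicate, whereas your maximum-principle argument sidesteps this entirely. The one place to be careful is the strictness claim in Step 1, which requires the interpolant $\varphi$ to satisfy $\varphi>-\tfrac32$ strictly for $1<|\psi|<\sqrt2$; this holds under the natural (strictly monotone) reading of the regularization, and even without strictness you could pass to the truncation $\tilde y$ as an equally good minimizer. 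Both arguments also implicitly use continuity of $y$ on $[0,1]$, which is consistent with the hypothesis $y(0)=\pm1$.
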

\begin{proof}
Assume that $y$ is not a monotone function. We will show that $y$ is not a minimizer, contradicting the hypotheses of Lemma \ref{monotone lemma}. Without any loss of generality, we can assume $y(0)=-1$ since anything we show for the other case can be done in the same way. There are two cases to consider: 
\begin{enumerate}
\item $y$ does not exceed $0$ but decreases on some interval and then increases to $0$ (depicted in Figure \ref{Figure 1}), and
\begin{figure}[H]
\centering
\includegraphics[scale=0.8,trim={0 13.5cm 18cm 0},clip]{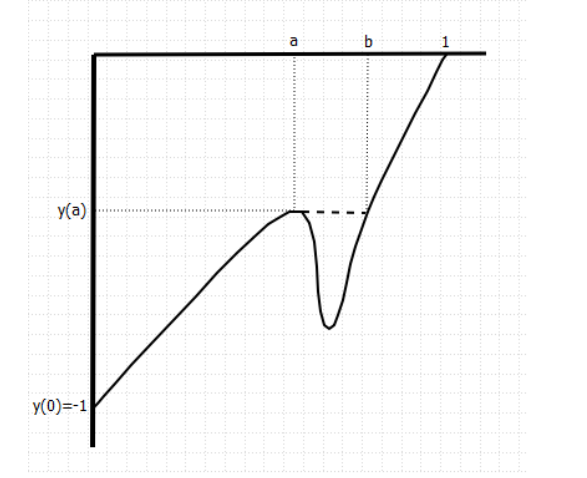}
\caption{Example of case 1. The bold dashed line represents the construction of \eqref{first corrector}.}
\label{Figure 1}
\end{figure}
\item $y$ exceeds $0$ and eventually decreases to $0$ at $\rho=1$ (depicted in Figures \ref{Figure 2} and \ref{Figure 3}).
\begin{figure}[H]
\centering
\includegraphics[scale=0.8,trim={1cm 8cm 17cm 5.3cm},clip]{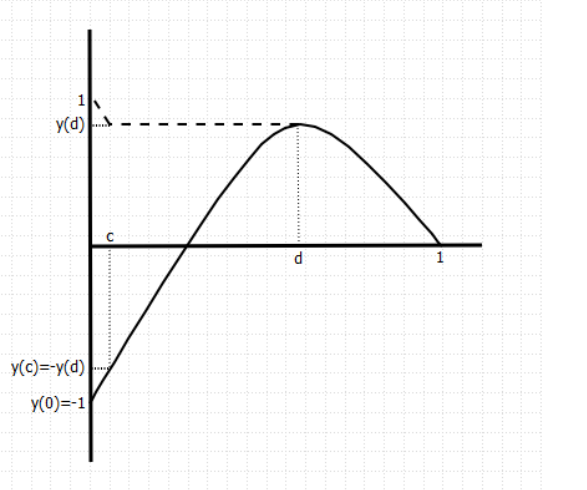}
\caption{Example of case 2, sub-case 1. The bold dashed line represents the construction of \eqref{second corrector}.}
\label{Figure 2}
\end{figure}
\begin{figure}[H]
\centering
\includegraphics[scale=0.8,trim={1cm 9cm 17.9cm 4.9cm},clip]{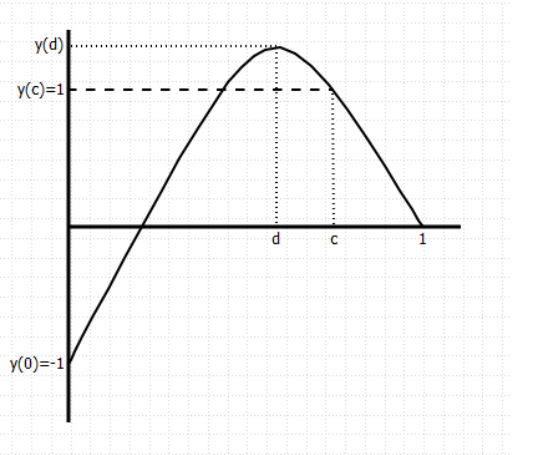}
\caption{Example of case 2, sub-case 2. The bold dashed line represents the construction of \eqref{third corrector}.}
\label{Figure 3}
\end{figure}
\end{enumerate}

In the first case, there exists an interval\footnote{There need not only be one. Where ever such an interval exists, we repeat this process.} $[a,b]$, $0\leq a<b\leq1$, in which $y(a)=y(b)<0$ but $y(a)>y(\rho)$ for $\rho\in(a,b)$. Consider the function
\begin{align}
\tilde{y}(\rho)=
\begin{cases}
y(\rho) &\rho\in[0,a)\bigcup(b,1]
\\
y(a) &\rho\in[a,b].
\end{cases} \label{first corrector}
\end{align}
Since $F[y(a)]<F[y(\rho)]$ for $\rho\in[a,b]$, $J[\tilde{y}]<J[y]$. Thus, we have constructed a new function with a smaller value of $J$.

In the second case, there exists an interval $[a,b]$, $0<a<b\leq1$ in which $y(a)=y(b)=0$ but $y(\rho)>0$ for $\rho\in(a,b)$. Further, there exists $d\in(a,b)$ such that $y(\rho)\leq y(d)$ for all $\rho\in[a,b]$. There are now two sub-cases to consider: $y(d)<1$ and $y(d)\geq1$.

If $y(d)<1$, then there must be some $c<d$ such that $y(c)=-y(d)$. We want to reflect the portion of the graph of $y$ before $c$ and then repeat the process used in the first case. This is done by considering the function
\begin{align}
\tilde{y}(\rho)=
\begin{cases}
-y(\rho) &\rho\in[0,c)
\\
y(d) &\rho\in[c,d]
\\
y(\rho) &\rho\in(d,1].
\end{cases} \label{second corrector}
\end{align}
Since $F[y(d)]<F[y(\rho)]$ for $\rho\in[c,d]$, $J[\tilde{y}]<J[y]$. 

If $y(d)\geq1$, then there must be some $c>d$ such that $y(c)=1$. We then consider the function
\begin{align}
\tilde{y}(\rho)=
\begin{cases}
1 &\rho\in[0,c]
\\
y(\rho) &\rho\in(c,1].
\end{cases} \label{third corrector}
\end{align}
Since $F[1]<F[y(\rho)]$ for $\rho\in[0,c]$, $J[\tilde{y}]<J[y]$.

In each case, we have shown that a non-monotone minimizer of $J$ with $y(0)=\pm1$ and $y(1)=0$ is not actually a minimizer of $J$. Therefore, a minimizer of $J$, $y\in X$, with $y(0)=\pm1$ and $y(1)=0$ is a monotone function.
\end{proof}

\begin{lemma}
$J$ attains its minimum in $X$ at a smooth function $y$ such that $-1\leq y\leq1$. \label{Minimum Attained and Smoothness Achieved}
\end{lemma}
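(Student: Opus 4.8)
The plan is to produce the minimizer by the direct method of the calculus of variations and then to read off its qualitative properties from the preceding lemmas. First I would fix a minimizing sequence $\{y_n\}\subset X$ with $J[y_n]\to m:=\inf_X J$, which is finite by Lemma \ref{C^1 Functional Bounded Below Lemma}. The coercivity estimate \eqref{Functional is Bounded Below} shows that $\|y_{n,\rho}\|_{L^2(B_1)}$ stays bounded once $J[y_n]\le m+1$, and since every $\psi\in X$ satisfies $\psi(1)=0$, the Poincar\'e inequality upgrades this to a uniform bound on $\|y_n\|_{H^1(B_1)}$. By reflexivity I extract a subsequence with $y_n\rightharpoonup y$ weakly in $H^1(B_1)$. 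The space $X$ is weakly closed: it is the intersection of the closed subspace of radial functions with the affine condition $\psi(1)=0$, and the latter survives weak limits because the trace operator is bounded, hence weakly continuous. Thus $y\in X$.

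Next I would establish that $J$ is weakly lower semicontinuous along this sequence. Write $J=D+P$ with the Dirichlet part $D[\psi]=\frac12\int_0^1\psi_\rho^2\,\rho^2\,d\rho$ and the potential part $P[\psi]=\frac12\int_0^1\frac{F(\psi)}{1-\rho^2}\,d\rho$. The part $D$ is convex and continuous, hence weakly lower semicontinuous, so $D[y]\le\liminf D[y_n]$. For $P$ I would prove outright weak continuity. Rellich--Kondrachov gives $y_n\to y$ strongly in $L^2(B_1)$, hence pointwise almost everywhere along a further subsequence, and I would split $[0,1]$ into a neighborhood of $0$, a compact middle interval, and a neighborhood of $1$. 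On the middle interval $F$ is bounded and dominated convergence passes the integral to the limit. Near $\rho=0$ the uniform boundedness of the regularized potential $F$ makes the contribution $O(\alpha)$, hence negligible as that neighborhood shrinks. Near $\rho=1$ the uniform $H^1$ bound yields a uniform H\"older-$\tfrac12$ estimate $|y_n(\rho)|\le C(1-\rho)^{1/2}$, exactly as in \eqref{Holder Continuous}, so that $\frac{|F(y_n)|}{1-\rho^2}$ stays uniformly bounded and that contribution is again uniformly small. A Vitali-type argument then yields $P[y_n]\to P[y]$, whence $J[y]=D[y]+P[y]\le\liminf D[y_n]+\lim P[y_n]=\liminf J[y_n]=m$, so $y$ is a minimizer.

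With existence in hand I would extract the remaining properties from the earlier lemmas. Because $J$ is $C^1$ on $X$ by Lemma \ref{C^1 Functional Bounded Below Lemma}, the minimizer $y$ is a critical point and hence solves the Euler--Lagrange equation \eqref{Reduced Self Similar Skyrme Maps ODE} in its regularized form. Lemma \ref{y(0) lemma} forces $y(0)=\pm1$, while membership in $X$ gives $y(1)=0$; Lemma \ref{monotone lemma} then shows $y$ is monotone. A monotone function joining $\mp1$ at $\rho=0$ to $0$ at $\rho=1$ must remain between those values, so $-1\le y\le1$. In this range the regularization is inactive, $F(\psi)=-3\psi^2(1-\tfrac12\psi^2)$, so $y$ in fact solves the unregularized equation \eqref{Reduced Self Similar Skyrme Maps ODE}; this is where the promised independence of the regularization is realized. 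Finally, $y\in C[0,1]$ with $y(0)=\pm1$ and $y(1)=0$, so Lemma \ref{Smoothness Lemma} upgrades $y$ to a smooth solution, completing the proof.

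I expect the main obstacle to be the weak continuity of the potential $P$. The weight $\frac{1}{1-\rho^2}$ is non-integrable at $\rho=1$ while the $\rho^2$ factor degenerates at $\rho=0$, so neither endpoint is controlled by naive strong convergence; moreover $P$ is nonconvex and enters with a sign, so Fatou alone is insufficient and one genuinely must pass to the limit rather than merely bound from below. The resolution rests on two structural features already built into the setup: the regularization bounds $F$ uniformly, suppressing any concentration at the origin, and the boundary condition $\psi(1)=0$ together with the uniform $H^1$ bound supplies the uniform H\"older decay that tames the singular weight at $\rho=1$. Verifying these bounds uniformly along the minimizing sequence is the technical heart of the argument.
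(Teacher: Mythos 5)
Your proposal is correct and follows essentially the same route as the paper: the direct method with a coercivity-based $H^1$ bound, weak lower semicontinuity of the Dirichlet part, passage to the limit in the potential part using strong $L^2$/pointwise convergence, and then Lemmas \ref{y(0) lemma}, \ref{monotone lemma}, and \ref{Smoothness Lemma} to get $y(0)=\pm1$, monotonicity, the bound $|y|\leq1$ (hence independence of the regularization), and smoothness. Your explicit treatment of the weight near $\rho=1$ via the uniform H\"older-$\tfrac12$ decay from the $H^1$ bound is a welcome elaboration of a step the paper handles more tersely with its dominating function and the Dominated Convergence Theorem, but it is not a different argument.
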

\begin{proof}
We employ an argument similar to that of the proof of the existence of a minimizer for an energy functional used in \cite{Analysis_LL}, page 276. Let $\{y_n\}$ be a minimizing sequence of $J$. That is, $\lim_{n\rightarrow\infty}J[y_n]=\inf_{\psi\in X}J[\psi]:=J_0$. By \eqref{Functional is Bounded Below}, $\{y_n\}$ is a bounded sequence in $H^1$. The Banach-Alaglou Theorem implies that there is a subsequence, also denoted $\{y_n\}$ which is weakly convergent in $H^1$ and strongly convergent in $L^2$ to a function $y\in X$. Furthermore, there exists a constant $C$ such that
\begin{equation}
\frac{y_n^2\Big(1-\frac{1}{2}y_n^2\Big)}{\rho^2(1-\rho^2)}
\leq C\Big(1+\frac{1}{\rho^2}\Big)
\end{equation}
for all $n$. This is certainly integrable on $B_1$. Even further, $y_n^2\Big(1-\frac{1}{2}y_n^2\Big)\rightarrow y^2\Big(1-\frac{1}{2}y^2\Big)$ almost everywhere. By the Dominated Convergence Theorem and weak lower semicontinuity of the $H^1$ norm,
\begin{align}
\begin{split}
J[y]&=J\big[\lim_{n\rightarrow\infty}y_n\big]
\\
&\leq\lim_{n\rightarrow\infty}J[y_n] \label{minimum}
\\
&=:J_0.
\end{split}
\end{align}
Since $J_0$ is the infimum of $J$, \eqref{minimum} implies $J[y]=J_0$. Consequently, the convergence is strong in $H^1$. Therefore, $J$ attains its minimum at a function $y\in X$. Further, $y$ is continuous since the $\alpha$-limit set of the corresponding smooth dynamical system in Lemma \ref{y(0) lemma} tells us $y(0)=\pm1$. By Lemma \ref{monotone lemma}, $y$ is also monotone. Thus, $|y|\leq1$. Therefore, by Lemma \ref{Smoothness Lemma}, $y$ is a smooth function of $\rho\in[0,1]$.
\end{proof}

\begin{remark}
Since the solution satisfies $|y|\leq1$, our minimization problem is independent of the regularization we placed on \eqref{Reduced Self Similar Skyrme Maps Minimization Functional}. Thus, Lemmas \ref{Smoothness Lemma}-\ref{Minimum Attained and Smoothness Achieved} are true for \eqref{Reduced Self Similar Skyrme Maps Minimization Functional} as well as \eqref{Regularized Reduced Self Similar Skyrme Maps Minimization Functional}.
\end{remark}

Now we can state the proof of Theorem \ref{Main Result}:
\begin{proof}
As previously stated, the Euler-Lagrange equation of the strong-field, equivariant Skyrme Map $U(t,r,\omega)=\big(u(t,r),\omega\big)$ is given by
\begin{equation}
u_{tt}-u_{rr}-\frac{2}{r}u_r+\cos u\bigg(\frac{u_t^2-u_r^2}{\sin u}+\frac{3\sin u}{r^2}\bigg)=0. \label{Full Skyrme Maps PDE}
\end{equation}
Let $\phi(x)$, $x=(r,\omega)\in\mathbb{R}^{5}$ and $\omega\in S^4\subset\mathbb{R}^5$, be the smooth function defined by
\begin{equation}
\phi(x)=(\cos^{-1} y(r),\omega)
\end{equation}
where $y$ is a smooth solution to \eqref{Reduced Self Similar Skyrme Maps ODE} with $y(0)=\pm1$ and $y(1)=0$. We can supply the following Cauchy data to \eqref{Full Skyrme Maps PDE}:
\begin{align}
\begin{split}
&U(-1,x)=\phi(x)
\\
\partial_t&U(-1,x)=x^i\partial_i\phi \label{Smooth Cauchy Data}
\end{split}
\end{align}
Here, it is implicitly understood that the equations for the angles, $\omega$, is trivial. Then in the past light cone of the origin of the Minkowski space, the solution is 
\begin{equation}
U(t,x)=\phi(-x/t).
\end{equation}
Since the solution is multivalued at the origin, $\partial_iU(t,0)\rightarrow\infty$ as $t\rightarrow0$.
\end{proof}

This concludes the argument and shows that there is a class of smooth initial data for the equivariant, strong-field Skyrme Model equation of motion which develops a singularity in finite time.

\section*{Acknowledgements}
Michael McNulty would like to thank Professor Shadi Tahvildar-Zadeh for suggesting this problem and for many illuminating conversations. 

\bibliographystyle{plain}
\bibliography{References}

\end{document}